\newtheorem{theorem}{Theorem}
\newtheorem{corollary}[theorem]{Corollary}
\newtheorem{claim}{Claim}
\begin{document}
\onehalfspace

\title{Unbalanced spanning subgraphs in edge labeled complete graphs\thanks{Research supported by research grant DIGRAPHS ANR-19-CE48-0013.}}  

\author{
St\'{e}phane Bessy$^1$ 
\and Johannes Pardey$^2$ 
\and Lucas Picasarri-Arrieta$^1$ 
\and Dieter Rautenbach$^2$} 

\date{}

\maketitle
\vspace{-10mm}
\begin{center}
{\small 
$^1$ LIRMM, Univ Montpellier, CNRS, Montpellier, France\\
\texttt{$\{$stephane.bessy,lucas.picasarri-arrieta$\}$@lirmm.fr}\\[3mm] 
$^2$ Institute of Optimization and Operations Research, Ulm University, Ulm, Germany\\
\texttt{$\{$johannes.pardey,dieter.rautenbach$\}$@uni-ulm.de}}
\end{center}

\begin{abstract}
Let $K$ be a complete graph of order $n$. 
For $d\in (0,1)$, let $c$ be a $\pm 1$-edge labeling of $K$ 
such that there are $d{n\choose 2}$ edges with label $+1$, 
and let $G$ be a spanning subgraph of
$K$ of maximum degree at most $\Delta$.  We prove the existence of an
isomorphic copy $G'$ of $G$ in $K$ such that the number of edges with label $+1$
in $G'$ is at least
$\left(c_{d,\Delta}-O\left(\frac{1}{n}\right)\right)m(G)$, 
where $c_{d,\Delta}=d+\Omega\left(\frac{1}{\Delta}\right)$ for fixed $d$, that is, 
this number visibly
deviates from its expected value when considering a uniformly random
copy of $G$ in $K$.  For $d=\frac{1}{2}$, and $\Delta\leq 2$, we
present more detailed results.\\[3mm] 
{\bf Keywords:} Zero sum Ramsey theory; Hamiltonian cycle; graph discrepancy
\end{abstract}

\section{Introduction}

Let $K$ be a complete graph with vertex set $[n]=\{ 1,\ldots,n\}$, 
and let $c:E(K)\to\{ \pm 1\}$ be a $\pm 1$-edge labeling of $K$.
The edge-labeling $c$ of $K$ is {\it balanced} 
if there are equally many {\it plus-edges} and {\it minus-edges},
that is, edges with label $+1$ and $-1$, respectively.
For a spanning subgraph $G$ of $K$, let $c(G)=c(E(G))=\sum\limits_{e\in E(G)}c(e)$, 
and let $m^+(G)$ and $m^-(G)$ be the number of plus-edges and minus-edges of $G$.
Note that $c(G)=m^+(G)-m^-(G)$. 
For a permutation $\pi$ from $S_n$,
let $G_\pi$ be the isomorphic copy of $G$ in $K$ with edge set $\{ \pi(u)\pi(v):uv\in E(G)\}$.

In the present paper, we study the structure of the set
$$\sigma_{(K,c)}(G)=\{ m^+(G_\pi):\pi\in S_n\}.$$
Our research is inspired by recent beautiful work of Caro, Hansberg, and Montejano \cite{cahamo}
on so-called {\it omnitonal} graphs.
Roughly speaking, 
a graph $G$ is said to be omnitonal 
if for every pair $(K,c)$, 
where the order $n$ of $K$ is sufficiently large and there are sufficiently many plus-edges and minus-edges in $K$,
and for every two non-negative integers $m^+$ and $m^-$ with $m(G)=m^++m^-$,
there is an isomorphic copy $G'$ of $G$ in $K$ with $m^+(G')=m^+$ and $m^-(G')=m^-$.
The key difference to the problems we study here 
is that the order of $K$ is necessarily much bigger than the order of $G$,
that is, 
the graph $G$ is far from being a spanning subgraph of $K$.
Quite surprisingly, exploiting recent strong results from Ramsey theory \cite{cumo,fosu},
Caro et al.~\cite{cahamo} achieve a very concise characterization of omnitonal graphs.
While being inspired by their work, requiring that $G$ is a spanning subgraph of $K$ 
drastically changes the nature of the problem.
The higher the density of a spanning graph $G$ is, the more every isomorphic copy of $G$ in $K$
is forced to reproduce the density of plus- and minus-edges in $(K,c)$.
Therefore, as a natural hypothesis excluding dense spanning subgraphs, 
we consider graphs of bounded maximum degree.

Another perspective on our results is that they correspond to relaxed versions of classical extremal problems,
which ask how many edges suffice to ensure the existence of a specific subgraph. 
In order to force a Hamiltonian cycle in a graph $G$ of order $n$, for instance, 
one needs to require at least ${n-1 \choose 2}+2$ edges in $G$, that is, 
the graph has to be almost complete with a density $m(G)/{n\choose 2}$ tending to $1$.
Our Theorem \ref{theorem1}(i) below can be rephrased to say 
that while a density of $1/2$ does not force the existence of a Hamiltonian cycle,
it forces the existence of $58\%$ of it, 
more precisely, of a Hamiltonian cycle in the complete graph on the same vertex set
in which $58\%$ of the edges belong to the original graph, which is best possible.
Also our main result, Theorem \ref{theorem0}, can be rephrased in such a way.

A third motivation for our results is their relation to {\it graph discrepancy} notions
originating in work of Erd\H{o}s et al.~\cite{erfuloso}
and recently considered in \cite{bacsjipl,bacspltr,frhylatr}.
In these works, 
the authors mainly focus 
on the minimum degree threshold ensuring high discrepancy;
considering a much simpler setting, we obtain better estimates,
and illustrate the relation to our results below in Corollary \ref{corollary1}.

\medskip

\noindent If $d\in [0,1]$ is such that $m^+(K)=d{n\choose 2}$, that is, $d$ is the density of the plus-edges in $(K,c)$,
then, since, by symmetry, every edge of $K$ belongs to the same number of subgraphs $G_\pi$, we obtain
\begin{eqnarray}\label{e1}
\frac{1}{n!}\sum_{\pi\in S_n}m^+(G_\pi)=dm(G).
\end{eqnarray}
Furthermore, transposition arguments as in \cite{cayu,mopara} imply that if 
$\sigma_{(K,c)}(G)=\{ m^+_1,\ldots,m^+_k\}$
for $m_1^+<\ldots<m_k^+$,
then 
\begin{eqnarray}\label{e2}
m_{i+1}^+-m_i^+\leq \Delta(G)+\delta(G)\leq 2\Delta(G)\mbox{ for every $i\in [k-1]$,}
\end{eqnarray}
that is, if the maximum degree $\Delta(G)$ of $G$ is small, then there are no big gaps in $\sigma_{(K,c)}(G)$.
This motivates to consider $\max\sigma_{(K,c)}(G)$ and $\min\sigma_{(K,c)}(G)$.

In the case that $c$ is balanced, that is, $d=\frac{1}{2}$,
the existence of copies $G_\pi$ for which $|c(G_\pi)|$ is small, or equivalently,
$m^+(G_\pi)$ is close to $\frac{m(G)}{2}$ has been studied under the term {\it zero sum problems}
or {\it zero sum Ramsey theory} \cite{ca,cahalaza,cayu,ehmora,gage,kisi,mopara,para}.
The observations (\ref{e1}) and (\ref{e2}) are based on common arguments from this area,
and together they imply the existence of some permutation $\pi$ from $S_n$ with 
$$\left|m^+(G_\pi)-d m(G)\right|\leq \Delta(G),$$
that is, the averaging arguments (\ref{e1}) and transformation arguments (\ref{e2})
imply the existence of some permutation $\pi$ from $S_n$ for which $m^+(G_\pi)$ 
is close to its expected value $d m(G)$, when choosing $\pi$ uniformly at random from $S_n$.

Our first result in this paper is that, for bounded maximum degree $\Delta(G)$, and $d\in (0,1)$,
the value $\max\sigma_{(K,c)}(G)$, and, by symmetry, also $\min\sigma_{(K,c)}(G)$,
visibly deviates from $d m(G)$.
Combined with (\ref{e2}), this implies that $\sigma_{(K,c)}(G)$ stretches in bounded discrete steps 
over a non-trivial interval depending on $d$ and $\Delta(G)$.

\begin{theorem}\label{theorem0}
If $K$ is a complete graph of order $n$ with $n\geq 4$, 
$c:E(K)\to\{ \pm 1\}$ is a $\pm 1$-edge labeling of $K$ such that $m^+(K)=d{n\choose 2}$,
and $G$ is a spanning subgraph of $K$ of maximum degree at most $\Delta$,
then there is a permutation $\pi$ from $S_n$ with 
$$m^+(G_\pi)\geq 
\begin{cases}
\left(d+\frac{2-d-2\sqrt{1-d}}{2\Delta+1}-\frac{3}{n-3}\right)m(G), \mbox{ if $d{n\choose 2}\leq \frac{8n^2-14n+3}{25}$, and}\\[3mm]
\left(d+\frac{\sqrt{d}-d}{2\Delta+1}-\frac{3}{n-3}\right)m(G), \mbox{ otherwise}.
\end{cases}$$
\end{theorem}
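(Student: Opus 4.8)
The plan is to beat the uniform average $dm(G)$ of \eqref{e1} by a \emph{two-phase embedding}: in the first phase I fix the images of a sparse, highly structured part of $G$ so as to exploit the distribution of plus-edges in $(K,c)$, and in the second phase I average over all completions of the permutation so that the remaining edges contribute essentially their fair share. The natural structured part is a matching $M\subseteq E(G)$. Since $\Delta(G)\le\Delta$, a maximal matching meets every edge of $G$, and each $e=xy\in M$ is incident to at most $\deg_G(x)+\deg_G(y)-1\le 2\Delta-1$ edges of $G$, so maximality gives $m(G)\le(2\Delta-1)|M|$ and hence $|M|\ge \frac{m(G)}{2\Delta-1}\ge\frac{m(G)}{2\Delta+1}$. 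This bounded-degree reservation of incident edges per matching edge is what ultimately produces the denominator $2\Delta+1$ once the per-edge gain of phase one is traded off against phase two.

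The quantitative heart is then the question: against an adversarial labeling $c$ with a \emph{fixed} plus-budget of $d\binom{n}{2}$ edges, how many of the $\mu=|M|$ edges of $M$ can an embedding be guaranteed to place on plus-edges of $K$? I expect the worst case to be the most concentrated labeling, in which the plus-edges are pushed into a single clique-like region $S$ of size $\approx\sqrt{d}\,n$ (so that $\binom{|S|}{2}\approx d\binom{n}{2}$), and that the answer splits into two regimes according to whether $M$ fits into such a region. For large $d$ one can route a densest induced part of $G$ into $S$, and optimizing the size of the hosted part yields the gain $\sqrt{d}-d$ per matching edge; this is the second case of Theorem~\ref{theorem0}. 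For small $d$ the matching is too large to be hosted on plus-edges alone, so one instead optimizes the fraction of $M$ placed on plus-edges against the unavoidable drop in plus-density on the complement caused by depleting the fixed budget, and the optimum (at a threshold governed by $\sqrt{1-d}$) gives the gain $\left(1-\sqrt{1-d}\right)^2=2-d-2\sqrt{1-d}$, the first case. The two optimized values coincide exactly when $\sqrt{d}=4/5$, i.e.\ $d=16/25$, which is the crossover recorded by the case split $d\binom{n}{2}\le\frac{8n^2-14n+3}{25}$; the final bound is obtained by taking the better of the two strategies.

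For the second phase I condition on the phase-one placement of the at most $2\mu$ matching vertices and let a uniformly random permutation complete the embedding on the unused positions. By the same symmetry that underlies \eqref{e1}, now restricted to the unused positions, each edge of $E(G)\setminus M$ becomes a plus-edge with conditional probability at least $d-O(1/n)$, since deleting $O(\mu)$ vertices and their incident plus-edges shifts the conditional plus-density by only $O(\mu/n)$. Bounding this conditional density from below by $d-\frac{3}{n-3}$ accounts precisely for the additive error term $-\frac{3}{n-3}m(G)$.

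The main obstacle is the extremal estimate in phase one: proving rigorously that the concentrated (clique/threshold) labeling is the worst case for the number of plus-edges capturable by an embedding of $M$ under a fixed plus-budget, and coupling this bound with the phase-two averaging \emph{without double-counting} the budget that the two phases draw on. It is this coupled one-parameter optimization that manufactures the square-roots $\sqrt{d}$ and $\sqrt{1-d}$; once it is set up correctly, extracting the closed forms $\sqrt{d}-d$ and $2-d-2\sqrt{1-d}$ and pinning down the exact constants $2\Delta+1$ and $\frac{3}{n-3}$ is delicate but routine bookkeeping.
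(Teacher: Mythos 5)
Your skeleton is essentially the paper's: a maximum matching $M_G^0$ in $G$ with $|M_G^0|\geq \frac{m(G)}{2\Delta-1}$, a plus-rich matching in $K$, a two-phase embedding, and linearity of expectation; your predicted per-edge gains $2-d-2\sqrt{1-d}$ and $\sqrt{d}-d$ and the crossover at $d=16/25$ are exactly what the paper obtains. However, two of your steps have genuine problems. The first is the extremal estimate that you yourself flag as ``the main obstacle'': you do not need to prove from scratch that concentrated labelings are worst --- the statement you need is precisely the Erd\H{o}s--Gallai bound (\ref{e3}), applied to the spanning subgraph of $K$ formed by the plus-edges, which has $d{n\choose 2}$ edges and therefore contains a matching consisting of plus-edges of size at least $\left(1-\sqrt{1-d}\right)n-\frac{1}{2}$ if $d{n\choose 2}\leq \frac{8n^2-14n+3}{25}$, and at least $\frac{\sqrt{d}}{2}n-\frac{1}{2}$ otherwise; extending this matching to a perfect matching $M_K$ of $K$ gives plus-fraction $p\geq 2-2\sqrt{1-d}-\frac{1}{n}$, respectively $p\geq \sqrt{d}-\frac{1}{n}$. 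As written, your proposal leaves its central quantitative lemma unproven, although this gap is closable simply by citing (\ref{e3}).

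The second gap is fatal as stated: your phase two conditions on a \emph{fixed} phase-one placement of the matched vertices and then averages over completions. Since your $M$ is a maximal matching of $G$, every edge of $E(G)\setminus M$ has at least one $M$-covered endpoint, and in the key examples both: if $G$ is a Hamiltonian cycle with $n$ even and $M$ a perfect matching of $G$, then phase one already determines the entire permutation, so there is no phase-two randomness left, and the claimed ``conditional probability at least $d-O(1/n)$'' is vacuous for those edges. Relatedly, your error estimate $O(\mu/n)$ is $O(1)$, not $O(1/n)$, whenever $\mu=\Theta(n)$, which is the typical regime for spanning subgraphs. The paper's remedy is to randomize phase one itself: extend $M_G^0$ to a perfect matching $M_G$ of $K$ using edges of $\overline{G}$, and choose a uniformly random bijection from $M_G$ to $M_K$ together with an independent uniform orientation of each matched pair. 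Under this distribution, every edge of $M_G^0$ is a plus-edge with probability exactly $p$, while every edge $uv\in E(G)\setminus M_G^0$ --- whose endpoints lie in two distinct $M_G$-edges --- lands uniformly on the edges of $K-M_K$ and is therefore a plus-edge with probability at least $d-\frac{1}{n-2}$; no conditioning on a fixed placement occurs, so the two phases never compete for the plus-budget and no double-counting issue arises. Without this (or an equivalent) device, the coupling you describe cannot be carried out. You also never treat odd $n$, which is where the paper's error term $\frac{3}{n-3}$ comes from (deleting a minimum-degree vertex); that part, at least, is routine.
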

All proofs are given in the second section.

We now focus in more detail on the balanced case, that is, 
in $(K,c)$ there are equally many plus-edges and minus-edges, or, equivalently, $d=\frac{1}{2}$.
Note that $K$ necessarily has an even number of edges in this case,
which implies that $n$ is equivalent to $0$ or $1$ modulo $4$.
Let ${\cal G}(n,\Delta)$ be the set of all triples $(K,c,G)$ such that 
$K$ is a complete graph of order $n$,
$c$ is a balanced $\pm 1$-edge labeling of $K$,
and $G$ is a spanning subgraph of $K$ of maximum degree at most $\Delta$.

If
$$c_\Delta=
\liminf\limits_{n\to\infty}
\left(\min\left\{
\max\left\{ \frac{m^+(G_\pi)}{m(G)}:\pi\in S_n\right\}:(K,c,G)\in {\cal G}(n,\Delta)\right\}\right),
$$
then Theorem \ref{theorem0} implies
$$c_\Delta\geq \frac{1}{2}+\frac{3-2\sqrt{2}}{4\Delta+2}.$$
Choosing $n$ as a multiple of $4$ and $\Delta+1$,
choosing $G$ as the disjoint union of copies of $K_{\Delta+1}$, 
and choosing $c$ such that the plus-edges of $K$ form the graph that arises 
by removing a matching of size $\frac{n}{4}$ from the complete bipartite graph $K_{\frac{n}{2},\frac{n}{2}}$
implies that 
$$c_\Delta\leq \frac{1}{2}+\frac{1}{2\Delta},$$
that is, there is little room for improvements of Theorem \ref{theorem0}.

A classical result of Erd\H{o}s and Gallai, Theorem 4.1 in \cite{erga}, 
states that a graph $G$ of order $n$, size $m$, 
and matching number $\nu$ satisfies 
\begin{eqnarray}\label{e3}
\nu & \geq & 
\begin{cases}
n-\frac{1}{2}-\sqrt{n^2-2m-n+\frac{1}{4}} & \mbox{, if $m\leq \frac{8n^2-14n+3}{25}$}\\[3mm]
\frac{1}{4}\left(\sqrt{8m+1}-1\right) & \mbox{, otherwise}.
\end{cases}
\end{eqnarray}
Erd\H{o}s and Gallai also show that (\ref{e3}) is best possible, which easily implies that 
$$c_1=2-\sqrt{2}\approx 0.58.$$
Our second result concerns $c_2$.

\begin{theorem}\label{theorem1}
Let $K$ be a complete graph of order $n$
and let $c:E(K)\to\{ \pm 1\}$ be a balanced $\pm 1$-edge labeling of $K$.
\begin{enumerate}[(i)]
\item If $n\geq 4$, 
then there is a Hamiltonian cycle $C$ of $K$ with 
$$m^+(C)\geq \left(2-\sqrt{2}\right)n-o(n)\approx 0.58n-o(n).$$
\item If $n\equiv 0\mod 3$, 
then there is a $C_3$-factor $F$ of $K$ with 
$$m^+(F)\geq \left(\frac{3\sqrt{2}}{4}-\frac{1}{2}\right)n-o(n)\approx 0.56n-o(n).$$
\end{enumerate}
\end{theorem}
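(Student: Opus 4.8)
\noindent\textbf{Proof proposal for Theorem \ref{theorem1}.}
Since both assertions involve only plus-edges, I would work throughout with the plus-graph $H=(V(K),\{e:c(e)=+1\})$; as $c$ is balanced, $H$ has $n$ vertices and exactly $\frac12\binom n2$ edges. For part (i) the first move is to pass from Hamiltonian cycles to linear forests: if $C$ is a Hamiltonian cycle of $K$ then its plus-edges induce a spanning subgraph of $H$ of maximum degree at most $2$ and without cycles, i.e.\ a spanning linear forest, and conversely every spanning linear forest of $H$ (a disjoint union of paths, isolated vertices allowed) can be closed to a Hamiltonian cycle of $K$ by inserting one $K$-edge between successive path-ends, which never destroys a plus-edge. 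Hence some Hamiltonian cycle $C$ satisfies $m^+(C)\ge\mathrm{lf}(H)$, where $\mathrm{lf}(H)$ is the maximum number of edges of a spanning linear forest of $H$, and it suffices to prove $\mathrm{lf}(H)\ge(2-\sqrt2)n-o(n)$. As the starting estimate I would feed $m=\frac12\binom n2$ into the Erd\H{o}s--Gallai bound (\ref{e3}); since $\frac12\binom n2\le\frac{8n^2-14n+3}{25}$ for large $n$, the first case applies and yields a matching of $H$ of size $\nu\ge\big(1-\tfrac1{\sqrt2}\big)n-o(n)$. The point to keep in mind is that $(2-\sqrt2)n=2\big(1-\tfrac1{\sqrt2}\big)n$, so the target is exactly \emph{twice} the matching bound.

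A matching is already a linear forest, giving the trivial $\mathrm{lf}(H)\ge\nu$; the crux is the doubling $\mathrm{lf}(H)\ge 2\nu-o(n)$. I would establish it by analysing a \emph{maximum} spanning linear forest $F$ directly rather than by extending a matching. Let $\bar T$ be the set of internal (degree-$2$) vertices of $F$ and $T=V\setminus\bar T$ the degree-$\le1$ vertices, and write $t_i$ for the number of vertices of degree $i$ in $F$, so that $|\bar T|=t_2$ and $\mathrm{lf}(H)=|E(F)|=t_2+\tfrac12 t_1$. Maximality makes $T$ almost independent in $H$: a plus-edge inside $T$ could be added to $F$ unless it joins the two ends of one path, so $H[T]$ has at most one edge per path of $F$, a lower-order quantity. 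Thus essentially every edge of $H$ meets $\bar T$, and counting gives $\frac12\binom n2\le\binom{t_2}{2}+t_2(n-t_2)+o(n^2)$ --- precisely the inequality underlying (\ref{e3}) --- whence $t_2\ge\big(1-\tfrac1{\sqrt2}\big)n-o(n)=\nu-o(n)$. It remains to show $t_2+\tfrac12 t_1\ge 2\nu-o(n)$, and this balancing is the step I expect to be the main obstacle: when $F$ is built from many short paths the endpoints make $t_1$ large, while when $F$ has few long paths $t_2$ itself is large, and the worst trade-off is attained exactly at the Erd\H{o}s--Gallai extremal graph $K_{\nu}\vee\overline{K}_{n-\nu}$ (there a maximum linear forest is $\nu$ disjoint $P_3$'s with $t_2=\nu$, $t_1=2\nu$, $\mathrm{lf}=2\nu$), which also shows the bound is best possible. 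I would resolve the trade-off by pushing the Erd\H{o}s--Gallai extremal analysis through for maximum linear forests instead of matchings --- a ``doubled'' version of (\ref{e3}) in which the degree cap $2$ replaces $1$ --- and then close $F$ into a Hamiltonian cycle as above to finish part (i).

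For part (ii) the same Erd\H{o}s--Gallai matching $M$ is the starting point, but every vertex must now be placed in a triangle, which is considerably more rigid. The construction I have in mind forms, for $\min\{\nu,\lfloor n/3\rfloor\}$ of the edges $x_iy_i\in M$, a triangle $\{x_i,y_i,z_i\}$ with distinct $z_i\notin V(M)$; each such triangle already carries the plus-edge $x_iy_i$, and choosing the $z_i$ among the plus-neighbours of an endpoint whenever available adds a second plus-edge. The remaining vertices are unmatched, hence span no plus-edge (the unmatched set of a maximum matching is independent in $H$), so they are tiled arbitrarily and contribute nothing. The guaranteed plus-count is therefore controlled by how many triangles can be made to carry two plus-edges, which I would bound by an assignment (Hall-type) argument for the $z_i$ fed by the density $\frac12\binom n2$, traded off against the wasted leftover triangles; optimizing this trade-off is what produces the constant $\frac{3\sqrt2}{4}-\frac12$. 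Since the triangle tiling forces the leftover vertices to be genuinely lost, whereas closing a single cycle in part (i) wastes nothing, the guaranteed fraction here drops below the value $2-\sqrt2$ of part (i); apart from this accounting, the Erd\H{o}s--Gallai input and the independence of the unmatched set are the only ingredients.
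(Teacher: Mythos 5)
Your reduction of part (i) to spanning linear forests of the plus-graph, and the closing of such a forest into a Hamiltonian cycle, is exactly the paper's first step, and your counting inequality is correct as far as it goes. But that inequality only yields $t_2\geq\left(1-\frac{1}{\sqrt{2}}\right)n-o(n)$, i.e.\ a linear forest with roughly $0.29n$ edges, which is half of the target, and you explicitly leave the doubling step as an expectation. This deferred step is not a routine transcription of the Erd\H{o}s--Gallai argument; it is essentially the entire content of the paper's proof of (i). Concretely, the paper chooses a maximum linear forest $H$ with as few paths as possible and proves further exchange claims that your ``$T$ is almost independent'' observation does not give: a vertex of $V_0$ cannot be adjacent to two consecutive vertices of any path $Q_i$ of $H-V_1$, hence has at most $\frac{1}{2}(n_2-\ell+2)$ neighbours in $V_2$ (Claim 2); the predecessors, under an orientation of the paths, of the $V_2$-neighbours of a fixed $V_0$-vertex form an independent set, forcing ${d-1\choose 2}$ non-edges inside $V_2$ (Claim 3); and there are at most $\frac{1}{2}n_1(n_2+\ell)$ edges between $V_1$ and $V_2$ (Claim 4). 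Only after feeding these refinements into the edge count and optimizing the resulting quadratics in $d$ and $k$ (three separate cases) does the bound double. Without them, your count cannot exclude a hypothetical plus-graph in which a maximum linear forest consists of few long paths with $t_2\approx 0.29n$ and $t_1=o(n)$, for which your argument certifies only $0.29n$ edges. So for (i) you have the right frame, a correct but insufficient counting bound, and the central lemma missing.

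Your plan for (ii) is genuinely different from the paper's, but as stated it fails on a concrete example. Consider the balanced labeling in which the plus-edges form (essentially) a clique on a set $W$ of about $\frac{n}{\sqrt{2}}$ vertices, all other edges being minus-edges. A maximum matching $M$ of the plus-graph then has $\nu\approx 0.35n>\frac{n}{3}$ and $V(M)=W$, so there are only $n-2\nu\approx 0.29n<\frac{n}{3}$ vertices outside $V(M)$: your prescription of $\min\{\nu,\lfloor n/3\rfloor\}$ triangles with distinct $z_i\notin V(M)$ is infeasible, every admissible $z_i$ is an isolated vertex of the plus-graph (so choosing $z_i$ among plus-neighbours of an endpoint is never available and no second plus-edges arise), and the leftover vertices are matched clique vertices, not an independent set as your accounting assumes. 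Your construction then certifies only about $0.29n$ plus-edges (at best about $0.42n$ if the leftover clique vertices are tiled among themselves), below the claimed $\left(\frac{3\sqrt{2}}{4}-\frac{1}{2}\right)n\approx 0.56n$, while the truth for this labeling is about $0.71n$, attained precisely by taking the third vertices inside $V(M)$. Moreover, the Hall-type assignment and the trade-off optimization that are supposed to produce the constant $\frac{3\sqrt{2}}{4}-\frac{1}{2}$ are never formulated. The paper obtains this constant by a different mechanism altogether: it fixes a $C_3$-factor $F$ maximizing $m^+(F)$ (and, subject to that, the number of triangles with exactly two plus-edges), uses local exchange arguments to bound the number of plus-edges between any two triangles of $F$ as a function of their types, and then solves the resulting quadratic program in the type densities $t_0,t_1,t_2,t_3$, whose optimum is $\frac{3\sqrt{2}}{4}-\frac{1}{2}$. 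Nothing in your sketch plays the role of these inter-triangle bounds, and the matching-based accounting cannot reproduce them.
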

In the setting of Theorem \ref{theorem1}, 
the inequality (\ref{e3}) easily implies the existence of some $2$-factor $H$ of $K$,
with $m^+(H)\geq \left(2-\sqrt{2}\right)n-o(n)$,
obtained by extending the union of two disjoint matchings of size $\left(1-\frac{1}{\sqrt{2}}\right)n-o(n)$
in the spanning subgraph of $K$ formed by the plus-edges.
Nevertheless, this argument does not allow any control of the structure of $H$.
It is conceivable that $c_2$ equals $c_1$, that is, $c_2=2-\sqrt{2}\approx 0.58$.
Choosing $G$ as the disjoint union of copies of $K_4$, 
and choosing $c$ such that the minus-edges essentially form a clique of order $\frac{n}{\sqrt{2}}$,
which corresponds to one of the extremal configurations for the estimate (\ref{e3}) of Erd\H{o}s and Gallai,
it follows that $c_3\leq 1-\frac{\sqrt{2}}{3}\approx 0.53$.

Before we proceed to the proofs,
we illustrate the relation of our results 
to the discrepancy notions mentioned above.
Following Balogh et al.~\cite{bacsjipl},
the term (\ref{edisc}) below can be considered the 
{\it discrepancy of Hamiltonian cycles 
in the complete graph $K$}.
In the setting considered in Corollary \ref{corollary1},
their Theorem 1 from \cite{bacsjipl} implies that (\ref{edisc}) 
is at least $\frac{1}{128}n-o(n)\approx 0.0078n-o(n)$.

\begin{corollary}\label{corollary1}
If $K$ is a complete graph of order $n$ with $n\geq 4$, then
\begin{eqnarray}\label{edisc}
\min\Big\{\max\Big\{|c(C)|:C\mbox{ is a Hamiltonian cycle in }K\Big\}:c\mbox{ is a $\pm 1$-edge labeling of $K$}\Big\}
\end{eqnarray}
is at least $\left(3-2\sqrt{2}\right)n-o(n)\approx 0.17n-o(n)$.
\end{corollary}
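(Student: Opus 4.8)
The plan is to use the symmetry $c\mapsto -c$ together with a monotonicity trick to reduce the arbitrary labeling appearing in Corollary \ref{corollary1} to the balanced labeling treated in Theorem \ref{theorem1}(i), and then to convert the guaranteed number of plus-edges of a Hamiltonian cycle into a lower bound on $|c(C)|$ via the identity $c(C)=2m^+(C)-n$.

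First I would fix a $\pm 1$-edge labeling $c$ of $K$ and let $d$ denote the density of its plus-edges. Since $(-c)(C)=-c(C)$ for every Hamiltonian cycle $C$, the inner quantity $\max\{|c(C)|:C\}$ is unchanged when $c$ is replaced by $-c$, and $-c$ has plus-edge density $1-d$. I may therefore assume $d\geq \frac12$. Because there are then at least $\frac12\binom n2$ plus-edges, I can relabel exactly $m^+(K)-\frac12\binom n2$ of them as minus-edges to obtain a balanced labeling $c'$ (this count is an integer precisely when $\binom n2$ is even, that is $n\equiv 0,1\bmod 4$; the other residues are discussed below). Applying Theorem \ref{theorem1}(i) to $c'$ then produces a Hamiltonian cycle $C$ of $K$ with $m^+_{c'}(C)\geq (2-\sqrt2)n-o(n)$.

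The key point is monotonicity: $c'$ arises from $c$ solely by switching labels from $+1$ to $-1$, so every plus-edge of $c'$ is also a plus-edge of $c$, giving $m^+_{c}(C)\geq m^+_{c'}(C)\geq (2-\sqrt2)n-o(n)$ for the very cycle $C$ just produced. Since $C$ has exactly $n$ edges, $c(C)=2m^+_{c}(C)-n\geq 2(2-\sqrt2)n-n-o(n)=(3-2\sqrt2)n-o(n)$, and hence $|c(C)|\geq (3-2\sqrt2)n-o(n)$. As $c$ was arbitrary, this is exactly the asserted lower bound on (\ref{edisc}).

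The substantive input is Theorem \ref{theorem1}(i); everything else is bookkeeping, and the only step needing care is the parity constraint. For $n\equiv 2,3\bmod 4$ no balanced labeling exists, and the relabeling above can reach only $|m^+(K)-m^-(K)|=1$. I would resolve this by observing that the guarantee of Theorem \ref{theorem1}(i) is insensitive to an $O(1)$ change in the number of plus-edges: its underlying argument rests on applying the Erd\H{o}s--Gallai bound (\ref{e3}) to the plus-edge graph, whose matching number varies by $O(1)$ when a single edge is added or deleted, so the conclusion $m^+_{c'}(C)\geq (2-\sqrt2)n-o(n)$ persists for such a nearly balanced $c'$, the single-edge defect being absorbed into the $o(n)$ term. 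This is the only delicate point, and it is minor.
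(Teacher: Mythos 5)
Your main argument coincides with the paper's own proof: reduce to $m^+(K)\geq m^-(K)$ via the symmetry $c\mapsto -c$, switch excess plus-labels to minus-labels to obtain a balanced labeling $c'$ lying ``below'' $c$, apply Theorem \ref{theorem1}(i) to $c'$, and convert via monotonicity and $c(C)=2m^+_c(C)-n$. The only genuine divergence is the parity case $n\bmod 4\in\{2,3\}$, and that is where your proposal has a flaw. You justify the needed robustness of Theorem \ref{theorem1}(i) by asserting that its ``underlying argument rests on applying the Erd\H{o}s--Gallai bound (\ref{e3}) to the plus-edge graph.'' That is a mischaracterization: (\ref{e3}) is used in the paper only for Theorem \ref{theorem0} and for the remarks about $c_1$ and $2$-factors, not in the proof of Theorem \ref{theorem1}(i), which is a self-contained extremal argument about a maximum system of disjoint paths in the plus-graph. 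Indeed it could not be otherwise: (\ref{e3}) controls only the matching number and, as the paper notes explicitly after Theorem \ref{theorem1}, ``does not allow any control of the structure'' of the subgraph, so no Hamiltonian-cycle statement can rest on it alone. As written, the justification of your ``only delicate point'' is therefore invalid.

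The claim you need is nevertheless true, and can be justified correctly by inspecting the actual proof: since you relabel only down to $m^+(K)-m^-(K)=1$, your nearly balanced $c'$ satisfies $m^+_{c'}(K)\geq \frac{1}{2}\binom{n}{2}$, and the balancedness hypothesis enters the proof of Theorem \ref{theorem1}(i) only in the form of the lower bound $m(G)=\frac{1}{2}\binom{n}{2}$, used to contradict upper bounds on $m(G)$ that are strictly smaller than $\frac{1}{2}\binom{n}{2}$; hence $m(G)\geq\frac{1}{2}\binom{n}{2}$ suffices and that proof applies verbatim to $c'$. Alternatively, the paper's treatment of this case avoids re-opening the proof of Theorem \ref{theorem1}(i) altogether: delete one or two vertices so that the remaining order $n'$ satisfies $n'\bmod 4\in\{0,1\}$, run the argument on $K'$, and reinsert the deleted vertices into the resulting Hamiltonian cycle by replacing one edge with a short path, which costs only an additive constant (at most $4$) absorbed by the $o(n)$ term. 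Either repair closes the gap; with the repair your proof is correct and, apart from this point, identical in substance to the paper's.
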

\begin{proof}
Let the $\pm 1$-edge labeling $c_0$ of $K$ minimize the maximum value of $|c_0(C)|$,
where $C$ is a Hamiltonian cycle in $K$,
that is, (\ref{edisc}) equals 
$\max\{|c_0(C)|:C\mbox{ is a Hamiltonian cycle in }K\}$.
Clearly, we may assume that the number of plus-edges under $c_0$ is at least the number of minus-edges, that is, $|c_0^{-1}(1)|\geq |c_0^{-1}(-1)|$.

For simplicity, we first assume that $n\,{\rm mod}\,4\in \{ 0,1\}$, 
which implies that $K$ has an even number of edges.
Let the $\pm 1$-edge labeling $c_1$ of $K$ arise from $c_0$
by changing $\frac{1}{2}(|c_0^{-1}(1)|-|c_0^{-1}(-1)|)$
of the $+1$-labels on edges to $-1$-labels,
which implies that $c_1$ is balanced.
By Theorem \ref{theorem1}(i),
there is a Hamiltonian cycle $C$ in $K$ 
with $m_{c_1}^+(C)\geq \left(2-\sqrt{2}\right)n-o(n)$,
where the index indicates with respect to which labeling we count the plus-edges.
By construction,
\begin{eqnarray*}
c_0(C) 
& \geq & m_{c_0}^+(C)-m_{c_0}^-(C)
\geq m_{c_1}^+(C)-m_{c_1}^-(C)
= 2m_{c_1}^+(C)-n
\geq \left(3-2\sqrt{2}\right)n-o(n),
\end{eqnarray*}
and, hence, (\ref{edisc}) is at least $\left(3-2\sqrt{2}\right)n-o(n)$.

If $n\,{\rm mod}\,4\not\in \{ 0,1\}$, then removing one or two vertices
yields a complete graph $K'$ of order $n'$ with $n'\,{\rm mod}\,4\in \{ 0,1\}$.
As above, we obtain the existence of a Hamiltonian cycle $C'$ in $K'$ with $c_0(C')\geq \left(3-2\sqrt{2}\right)n'-o(n')$.
Replacing one edge of $C'$ with two or three edges including the removed vertices into $C'$ yields a Hamiltonian cycle $C$ in $K$ with 
$c_0(C)\geq \left(3-2\sqrt{2}\right)n'-o(n')-4=\left(3-2\sqrt{2}\right)n-o(n)$.
Hence, also in this case (\ref{edisc}) is at least $\left(3-2\sqrt{2}\right)n-o(n)$.
\end{proof}
Similarly, Theorem \ref{theorem0} implies that the suitably defined
{\it discrepancy of a fixed spanning subgraph 
with $m$ edges and maximum degree at most $\Delta$
in a complete graph of order $n$} is at least
$\left(\frac{3-2\sqrt{2}}{2\Delta+1}-O\left(\frac{1}{n}\right)\right)m$.

\section{Proofs}

\begin{proof}[Proof of Theorem \ref{theorem0}]
First, we assume that $n$ is even.

Let $d^*$ be such that $d^*{n\choose 2}=\frac{8n^2-14n+3}{25}$.
Since $n\geq 4$, it follows that $\frac{1}{2}\leq d^*\leq \frac{16}{25}$.

Note that 
$$
n-\frac{1}{2}-\sqrt{n^2-2d{n\choose 2}-n+\frac{1}{4}}
=n-\frac{1}{2}-\sqrt{(1-d)n^2-\underbrace{\left((1-d)n-\frac{1}{4}\right)}_{\geq 0}}
\geq n-\frac{1}{2}-\sqrt{(1-d)n^2}
$$
for $d\leq d^*$ and $n\geq 4$.
Furthermore, note that the expression
$$\left(\frac{1}{4}\left(\sqrt{8d{n \choose 2}+1}-1\right)\right)-\left(\frac{\sqrt{d}}{2}n-\frac{1}{2}\right)$$
is decreasing with respect to $d$ for $d$ in $\left[d^*,1\right]$,
and that it equals $0$ for $d=1$.
Therefore, by (\ref{e3}), there is a perfect matching $M_K$ in $K$ such that 
\begin{eqnarray*}
|M_K\cap c^{-1}(1)| & \geq & 
\begin{cases}
\left(1-\sqrt{1-d}\right)n-\frac{1}{2}& \mbox{, if $d\leq d^*$, and}\\[3mm]
\frac{\sqrt{d}}{2}n-\frac{1}{2} & \mbox{, otherwise}.
\end{cases}
\end{eqnarray*}
For $p=\frac{|M_K\cap c^{-1}(1)|}{|M_K|}$, this implies
\begin{eqnarray}\label{epp}
p & \geq & 
\begin{cases}
2-2\sqrt{1-d}-\frac{1}{n},& \mbox{ if $d\leq d^*$, and}\\[3mm]
\sqrt{d}-\frac{1}{n}, & \mbox{ otherwise}.
\end{cases}
\end{eqnarray}
Since $G$ has maximum degree at most $\Delta$,
a simple greedy argument implies that a maximum matching $M^0_G$ in $G$ satisfies
\begin{eqnarray}\label{em0}
|M_G^0| \geq \frac{m(G)}{2\Delta-1}.
\end{eqnarray}
Let $M_G$ be a perfect matching in $K=G\cup \overline{G}$ containing $M_G^0$,
that is, we extend $M_G^0$ by addding edges from $\overline{G}$.

Now, instead of choosing $\pi$ from $S_n$ uniformly at random, 
which leads to $\mathbb{E}[m^+(G_\pi)]\stackrel{(\ref{e1})}{=}dm(G)$,
we change the random choice of $\pi$ as follows in order to exploit $M_K$ and $M_G$:
\begin{itemize}
\item We bijectively assign the $\frac{n}{2}$ edges in $M_G$ 
uniformly at random to the $\frac{n}{2}$ edges in $M_K$,
that is, each of the $\left(\frac{n}{2}\right)!$ assignments is equally likely.
\item If an edge $uv$ from $M_G$ is assigned to an edge $xy$ from $M_K$,
then we choose $\pi$ from $S_n$ such that 
$(\pi(u),\pi(v))=(x,y)$ or 
$(\pi(u),\pi(v))=(y,x)$ equally likely.
Considering all $\frac{n}{2}$ edges of the perfect matchings, this leads to $2^{\frac{n}{2}}$ many possibilities.
\end{itemize}
Altogether, 
we choose the permutation $\pi$ uniformly at random from a subset of $\left(\frac{n}{2}\right)!2^{\frac{n}{2}}$
permutations from $S_n$.

For $uv\in M_G^0$, we obtain
\begin{eqnarray*}
\mathbb{P}[c(\pi(u)\pi(v))=1] & = & p,
\end{eqnarray*}
because the fraction of plus-edges in $M_K$ is exactly $p$.

For $uv\in E(G)\setminus M_G^0$, 
note that there are exactly $\left(d{n\choose 2}-\frac{pn}{2}\right)$ plus-edges in $K-M_K$,
and that $\pi(u)\pi(v)$ equals each of these with probability 
$\frac{2\left(\frac{n}{2}-2\right)!2^{\frac{n}{2}-2}}{\left(\frac{n}{2}\right)!2^{\frac{n}{2}}}$:
In fact, for $\pi(u)\pi(v)$ to equal some plus-edge $xy$ in $K-M_K$, 
the two edges from $M_G$ containing $u$ and $v$ have to be assigned to the two edges from $M_K$ containing $x$ and $y$,
and $\{ \pi(u),\pi(v)\}$ has to equal $\{ x,y\}$. 
There are exactly two possibilities for this.
The remaining $\frac{n}{2}-2$ edges from $M_G$ can be mapped onto the remaining $\frac{n}{2}-2$ edges from $M_K$
without any further restriction.
There are exactly $\left(\frac{n}{2}-2\right)!2^{\frac{n}{2}-2}$ possibilities for this.

We obtain that 
\begin{eqnarray*}
\mathbb{P}[c(\pi(u)\pi(v))=1] & = & \frac{2\left(\frac{n}{2}-2\right)!2^{\frac{n}{2}-2}}{\left(\frac{n}{2}\right)!2^{\frac{n}{2}}}\left(d{n\choose 2}-\frac{pn}{2}\right)
\stackrel{p\leq 1}{\geq} d-\frac{1-d}{n-2}
\stackrel{d\geq 0}{\geq} d-\frac{1}{n-2}
\end{eqnarray*}
Note that
\begin{eqnarray}\label{ep2}
d &\leq &
\begin{cases}
2-2\sqrt{1-d} & \mbox{, if $d\leq d^*$, and}\\[3mm]
\sqrt{d} & \mbox{, otherwise}.
\end{cases}
\end{eqnarray}
If $d\leq d^*$, then linearity of expectation implies
\begin{eqnarray*}
\mathbb{E}[m^+(G_\pi)] & \geq & p|M^0_G|
+\left(d-\frac{1}{n-2}\right)\left(m(G)-|M^0_G|\right)\\
& \stackrel{(\ref{epp})}{\geq} & \left(2-2\sqrt{1-d}-\frac{1}{n}\right)|M^0_G|+\left(d-\frac{1}{n-2}\right)\left(m(G)-|M^0_G|\right)\\
&\geq &\left(2-2\sqrt{1-d}\right)|M^0_G|+d\left(m(G)-|M^0_G|\right)-\frac{m(G)}{n-2}\\
&\stackrel{(\ref{em0}),(\ref{ep2})}{\geq} &\left(2-2\sqrt{1-d}\right)\frac{m(G)}{2\Delta+1}+d\left(m(G)-\frac{m(G)}{2\Delta+1}\right)-\frac{m(G)}{n-2}\\
&=& \left(d+\frac{2-d-2\sqrt{1-d}}{2\Delta+1}-\frac{1}{n-2}\right)m(G).
\end{eqnarray*}
Similarly, if $d>d^*$, then linearity of expectation and a similar estimation as above imply
\begin{eqnarray*}
\mathbb{E}[m^+(G_\pi)] 
&\stackrel{(\ref{epp}),(\ref{em0}),(\ref{ep2})}{\geq} &\sqrt{d}\frac{m(G)}{2\Delta+1}+d\left(m(G)-\frac{m(G)}{2\Delta+1}\right)-\frac{m(G)}{n-2}\\
&=& \left(d+\frac{\sqrt{d}-d}{2\Delta+1}-\frac{1}{n-2}\right)m(G),
\end{eqnarray*}
which completes the proof in the case that $n$ is even.

Now, let $n$ be odd.
There is a vertex $x$ of $K$ such that $m^+(K-x)\geq d{n-1\choose 2}$.
Possibly replacing $G$ by an isomorphic copy, 
we may assume that $x$ is a vertex of minimum degree in $G$,
that is, $m(G-x)=m(G)-\delta(G)\geq \left(1-\frac{2}{n}\right)m(G)$.
Therefore, applying the above estimates to $\left(K-x,c\mid_{E(K-x)}\right)$ and $G-x$, 
we obtain 
\begin{eqnarray*}
\mathbb{E}[m^+(G_\pi)] 
&\geq&
\begin{cases}
\left(d+\frac{2-d-2\sqrt{1-d}}{2\Delta+1}-\frac{1}{n-3}\right)(m(G)-\delta(G)), \mbox{ if $d\leq d^*$, and}\\[3mm]
\left(d+\frac{\sqrt{d}-d}{2\Delta+1}-\frac{1}{n-3}\right)(m(G)-\delta(G)), \mbox{ otherwise}
\end{cases}\\[3mm]
&\geq&
\begin{cases}
\left(d+\frac{2-d-2\sqrt{1-d}}{2\Delta+1}-\frac{1}{n-3}\right)\left(1-\frac{2}{n}\right)m(G), \mbox{ if $d\leq d^*$, and}\\[3mm]
\left(d+\frac{\sqrt{d}-d}{2\Delta+1}-\frac{1}{n-3}\right)\left(1-\frac{2}{n}\right)m(G), \mbox{ otherwise}
\end{cases}\\[3mm]
&\geq&
\begin{cases}
\left(d+\frac{2-d-2\sqrt{1-d}}{2\Delta+1}-\frac{3}{n-3}\right)m(G), \mbox{ if $d\leq d^*$, and}\\[3mm]
\left(d+\frac{\sqrt{d}-d}{2\Delta+1}-\frac{3}{n-3}\right)m(G), \mbox{ otherwise},
\end{cases}
\end{eqnarray*}
which completes the proof.
\end{proof}

\begin{proof}[Proof of Theorem \ref{theorem1}(i)]
Let $G$ be the spanning subgraph of $K$ formed by the plus-edges,
that is, the graph $G$ equals $(V(K),c^{-1}(1))$.
In view of the desired statement, we may assume that $n\geq 10$.
It is easy to see that the desired statement is equivalent (up to the specific choice of the $o(n)$ term) to 
the existence of non-trivial disjoint paths $P_1,\ldots,P_k$ in $G$
such that 
$$m(P_1)+\cdots+m(P_k)\geq 2n + 3 - \sqrt{2n^2 + 14n + 1}.$$
In fact, removing the minus-edges from a Hamiltonian cycle $C$ as in the statement yields such paths
(as well as some isolated vertices),
and, conversely, 
such paths (and the remaining isolated vertices)
can easily be concatenated with edges from $K$ to form a Hamiltonian cycle of $K$.
Therefore, let the non-trivial disjoint paths $P_1,\ldots,P_k$ in $G$ be chosen such that
\begin{itemize}
\item $m(H)$ is as large as possible, where $H=P_1\cup\ldots\cup P_k$, and
\item subject to the first condition, the number $k$ of paths is as small as possible.
\end{itemize}
For a contradiction, we suppose that $m(H)<2n + 3 - \sqrt{2n^2 + 14n + 1}$.
For $d\in \{ 1,2\}$, let $V_d$ be the set of vertices that have degree $d$ in $H$,
and, let $n_d=|V_d|$.
Let $V_0=V(G)\setminus (V_1\cup V_2)$, and $n_0=|V_0|$.
Let $P_i$ have the endvertices $x_i$ and $y_i$ for every $i\in [k]$,
that is, $V_1=\{ x_1,y_1,\ldots,x_k,y_k\}$.
Note that $n_1=2k$, $n_2=m(H)-k$, and $n_0=n-n_1-n_2=n-m(H)-k$.

\begin{claim}\label{claim1}
$m(G[V_0\cup V_1])\leq k$.
\end{claim}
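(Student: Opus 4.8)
We need to show that $H$, the union of the extremal disjoint nontrivial paths, is chosen so that the subgraph of $G$ induced on the non-internal vertices $V_0 \cup V_1$ has at most $k$ edges. The plan is to exploit the extremality of the chosen path system: both conditions (maximizing $m(H)$, then minimizing $k$) should forbid too many edges among the low-degree vertices, since such edges could be used to extend or merge paths, contradicting one of the two optimality conditions.

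**The main argument.** First I would classify the edges of $G[V_0 \cup V_1]$ by the types of their endpoints, which lie in $V_0$ (degree $0$ in $H$) or $V_1$ (endpoints of the $P_i$). An edge with both endpoints in $V_0$ could be added to $H$ as a new trivial-to-nontrivial path, strictly increasing $m(H)$, contradicting the first extremality condition; so no such edge exists. An edge from a vertex $v \in V_0$ to an endpoint $x_i$ of some path could be appended to $P_i$, again increasing $m(H)$; so these are forbidden too. This leaves only edges joining two vertices of $V_1$. The key point is that an edge joining the endpoints of two \emph{distinct} paths $P_i$ and $P_j$ would let us concatenate them into a single path, strictly decreasing $k$ while keeping $m(H)$ fixed (in fact increasing it by one), contradicting extremality. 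An edge joining the two endpoints $x_i, y_i$ of the \emph{same} path $P_i$ would close it into a cycle; removing any one edge of that cycle and re-opening gives a path using the new edge, and this manipulation would increase $m(H)$ by one, again a contradiction. Hence essentially the only surviving edges in $G[V_0 \cup V_1]$ are edges already belonging to the paths at the endpoints—but those are incident to $V_2$ and so are not counted here. I would then argue that each path contributes at most one ``chargeable'' endpoint-edge, giving the bound $k$.

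**Where the real care is needed.** The delicate step is handling edges between the two endpoints of the same path, and more generally making the counting tight at exactly $k$ rather than, say, $2k$. The naive forbidden-edge analysis shows $G[V_0 \cup V_1]$ has no edge touching $V_0$ and no edge between distinct paths' endpoints, so every remaining edge lies within a single $\{x_i, y_i\}$ pair; since there are $k$ pairs and each pair spans at most one edge, the count is at most $k$. The obstacle is verifying rigorously that the ``same-path chord'' $x_i y_i$ cannot coexist with extremality: the concatenation/re-opening move must be shown to strictly improve the objective in the lexicographic order (first $m(H)$, then $-k$), and one must check that the move genuinely produces \emph{nontrivial} disjoint paths lying in $G$, since the new edge is a plus-edge by assumption. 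I expect this endpoint bookkeeping—ensuring each of the $k$ paths is charged at most once and that no chord survives—to be the crux; everything else reduces to observing that an augmenting edge contradicts maximality of $m(H)$.
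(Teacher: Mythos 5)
Your final counting is sound and is exactly the paper's argument: every edge $uv$ of $G[V_0\cup V_1]$ that is not of the form $x_iy_i$ (the two endvertices of a single path) makes $H+uv$ a union of non-trivial disjoint paths with more edges than $H$ --- a new one-edge path if $u,v\in V_0$, an extension of $P_i$ if exactly one of $u,v$ is an endvertex of $P_i$, and a concatenation of $P_i$ and $P_j$ if $u,v$ are endvertices of distinct paths --- contradicting the maximality of $m(H)$. Hence $E(G[V_0\cup V_1])\subseteq\{x_iy_i:i\in[k]\}$, which has at most $k$ elements. Note that this never uses the secondary condition minimizing $k$; maximality of $m(H)$ alone suffices.

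However, the step you single out as the crux --- ruling out the same-path chord $x_iy_i$ --- is wrong, and fortunately unnecessary. Your proposed move (add $x_iy_i$ to close $P_i$ into a cycle, then delete some other edge of that cycle to re-open it) produces a path with exactly $m(P_i)$ edges, the same number as before: the cycle has $m(P_i)+1$ edges and you remove one of them, so $m(H)$ does not increase and no contradiction arises. Indeed such chords genuinely occur in extremal configurations: if a component of $G$ is a cycle, any maximum path system omits exactly one of its edges, and the omitted edge is precisely a chord $x_iy_i$. This is why the claim asserts the bound $k$ rather than $0$ --- one surviving edge per path must be allowed, and the bound can be attained. Relatedly, your remark that the surviving endpoint edges ``are incident to $V_2$ and so are not counted'' overlooks paths of length one: such a $P_i$ has both endvertices in $V_1$ and its unique edge lies in $G[V_0\cup V_1]$; it is of the form $x_iy_i$, so the count covers it, but it cannot be argued away. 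Deleting the false exclusion step (and the claim that it is the delicate point) leaves a correct proof identical to the paper's.
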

\begin{proof}
If $uv$ is an edge of $G[V_0\cup V_1]$ that does not belong to $\{ x_iy_i:i\in [k]\}$,
then $H+uv$ is the union of non-trivial disjoint paths in $G$ with more edges than $H$,
which is a contradiction. 
Therefore, the edge set of $G[V_0\cup V_1]$ is contained in $\{ x_iy_i:i\in [k]\}$.
\end{proof}
The components of $H-V_1$ are paths $Q_1,\ldots,Q_\ell$ with $\ell\leq k$,
where we may assume that $Q_i=P_i-x_i-y_i$.
Note that a $Q_i$ may be trivial, that is, consist of just one vertex only.
If $\ell=0$, then $V(K)=V_0\cup V_1$, and Claim \ref{claim1} implies
$m(G)=k\leq \frac{n}{2}<\frac{1}{2}{n\choose 2}$,
which contradicts the hypothesis that $c$ is balanced.
Since $G$ contains $\ell$ disjoint paths $P_1,\ldots,P_\ell$ of order at least $3$, we have $\ell\leq \frac{n}{3}$.

\begin{claim}\label{claim2}
For every vertex $u$ from $V_0$, 
there are at most $\frac{1}{2}(n_2-\ell+2)$ edges in $G$ between $u$ and $V_2$,
and $u$ is adjacent to an endvertex in at most one of the paths $Q_1,\ldots,Q_\ell$.
\end{claim}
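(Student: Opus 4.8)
The plan is to prove both assertions by local exchange arguments that exploit, respectively, the maximality of $m(H)$ and the minimality of $k$. Throughout, write $Q_i=z_1^{(i)}\cdots z_{s_i}^{(i)}$ for the $i$-th interior path, so that its two endvertices are $z_1^{(i)}$ and $z_{s_i}^{(i)}$ (each adjacent in $P_i$ to an endpoint of $P_i$, and coinciding when $Q_i$ is trivial), its vertices all lie in $V_2$, and $s_1+\cdots+s_\ell=n_2$. Fix $u\in V_0$.

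First I would record a consequence of the maximality of $m(H)$: the vertex $u$ has no two consecutive neighbours on any $Q_i$. Indeed, if $u$ were adjacent in $G$ to both $z_j^{(i)}$ and $z_{j+1}^{(i)}$, then deleting the edge $z_j^{(i)}z_{j+1}^{(i)}$ and inserting the two $G$-edges $uz_j^{(i)}$ and $uz_{j+1}^{(i)}$ would splice $u$ into $P_i$, keeping all degrees at most $2$ and the family a disjoint union of non-trivial paths in $G$, while increasing $m(H)$ by one, a contradiction. Hence the neighbours of $u$ inside any fixed $Q_i$ form an independent set of that path.

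Next I would prove the endvertex assertion using the minimality of $k$. Suppose $u$ were adjacent to an endvertex $a$ of $Q_i$ and to an endvertex $b$ of $Q_j$ with $i\neq j$; say $a$ is adjacent in $P_i$ to the endpoint $x_i$ and $b$ to $x_j$. Deleting $x_ia$ and $x_jb$ and inserting $ua,ub$ isolates $x_i,x_j$ and merges the two path-remnants into the single path $y_i\cdots a\,u\,b\cdots y_j$; this again keeps the family a disjoint union of non-trivial paths in $G$ and leaves $m(H)$ unchanged, but decreases the number of paths to $k-1$, contradicting minimality. Therefore there is at most one index $i_0$ for which $u$ has a neighbour among the endvertices of $Q_{i_0}$, which is precisely the second assertion.

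Finally I would combine these two facts into the counting bound. For every path $Q_i$ other than the exceptional $Q_{i_0}$, the neighbours of $u$ avoid both endvertices and no two are consecutive, so they lie in $\{z_2^{(i)},\dots,z_{s_i-1}^{(i)}\}$ and number at most $\lceil (s_i-2)/2\rceil\le (s_i-1)/2$; for $Q_{i_0}$ the neighbours may additionally include its (at most two, necessarily non-consecutive) endvertices, and a short parity check gives at most $(s_{i_0}+1)/2=(s_{i_0}-1)/2+1$. Summing over all $\ell$ paths yields at most $\sum_i (s_i-1)/2+1=\tfrac12(n_2-\ell)+1=\tfrac12(n_2-\ell+2)$ neighbours of $u$ in $V_2$, as claimed. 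I expect the main work to lie not in the exchange arguments, which are routine once the augmentation is set up correctly, but in this last counting step: one must verify the per-path independent-set bounds with the right parities and the small-path behaviour ($s_i\in\{1,2\}$ and trivial $Q_i$) so that the single endvertex-path permitted by the second assertion contributes exactly the additive $+1$ that produces the stated constant.
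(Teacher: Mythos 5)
Your proposal is correct and follows essentially the same route as the paper's proof: the same two exchange arguments (splicing $u$ into a path via two consecutive neighbours to contradict the maximality of $m(H)$, and merging two paths through $u$ via endvertices of distinct $Q_i,Q_j$ to contradict the minimality of $k$), followed by the same per-path independent-set count $\frac{n(Q_{i_0})+1}{2}+\sum_{i\neq i_0}\frac{n(Q_i)-1}{2}=\frac{1}{2}(n_2-\ell+2)$. Your extra attention to parities and to trivial or two-vertex paths $Q_i$ is sound but does not change the argument.
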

\begin{proof}
If $u$ is adjacent to two consecutive vertices $v$ and $w$ of some
$Q_i$, then $H-vw+vu+uw$ is the union of non-trivial disjoint paths in
$G$ with more edges than $H$, which is a contradiction.  Hence, the
vertex $u$ has at most $\frac{n(Q_i)+1}{2}$ many neighbors in $V(Q_i)$
for every $i$ in $[\ell]$.  If there are two distinct paths $Q_i$ and
$Q_j$ such that $u$ is adjacent to an endvertex $x_i'$ in $Q_i$ as
well as an endvertex $x_j'$ in $Q_j$, then, by symmetry, we may assume
that $x_i$ is a neighbor of $x_i'$ in $H$, and $x_j$ is a neighbor of
$x_j'$ in $H$, and $H-x_ix_i'-x_jx_j'+x_i'u+x_j'u$ is the union of
less than $k$ non-trivial disjoint paths in $G$ with the same number
of edges as $H$, which is a contradiction.  
So, assuming that $u$ is adjacent to an endvertex of $Q_i$, 
for every $i'\in [\ell]\setminus \{i\}$, 
the vertex $u$ is adjacent to at most $\frac{n(Q'_i)-1}{2}$ vertices of $Q'_{i'}$.
This implies that the number of neighbors of $u$ in $V_2$ is at most 
$$\frac{n(Q_i)+1}{2}+\sum\limits_{j\in [\ell]\setminus \{ i\}}\frac{n(Q_j)-1}{2}=\frac{n_2-\ell+2}{2}$$
Clearly, if $u$ is not adjacent to any endvertex of a path $Q_i$, this upper bound also holds.
\end{proof}
Let $d$ be the average number of neighbors in $V_2$ of the vertices in $V_0$,
that is, there are $dn_0$ edges in $G$ between $V_0$ and $V_2$ altogether.
Claim \ref{claim2} implies 
\begin{eqnarray}\label{ed1}
d\leq \frac{n_2-\ell+2}{2}\leq \frac{n_2+1}{2}.
\end{eqnarray}

\begin{claim}\label{claim3}
There are at least ${d-1\choose 2}$ non-edges in $G[V_2]$.
\end{claim}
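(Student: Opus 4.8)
The plan is to exhibit a single vertex of $V_0$ whose neighbourhood in $V_2$ already contains the required number of non-edges. Since the average number of neighbours in $V_2$ of the vertices of $V_0$ equals $d$, there is a vertex $u\in V_0$ with at least $d$ neighbours in $V_2$; set $N=N_G(u)\cap V_2$ and $t=|N|\ge d$. First I would record that $u$ has no neighbour outside $V_2$: a neighbour in $V_0$ would, together with $u$, form an additional non-trivial path, and a neighbour in $V_1$ would extend one of the paths $P_i$, in both cases contradicting the maximality of $m(H)$. Hence $N=N_G(u)$, and it suffices to bound the number of non-edges inside $G[N]$.

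The heart of the matter is to show that $G[N]$ spans at most $t-1$ edges, that is, that $G[N]$ is a forest. By Claim~\ref{claim2} the vertex $u$ is not adjacent to two consecutive vertices of any $Q_i$, so no edge of $G[N]$ is an edge of $H$; every edge of $G[N]$ is a chord with respect to the path system $H$. Suppose for contradiction that $G[N]$ contained a cycle $w_1w_2\cdots w_sw_1$. Since $u$ is adjacent to each $w_i$ and each $w_i$ is an interior vertex of some path, I would reroute $H$ along this cycle: for two consecutive cycle vertices $w_i,w_{i+1}$ I delete one incident $H$-edge at each and insert the edges $uw_i$ and $uw_{i+1}$, which absorbs the free vertex $u$ into the path system without decreasing the number of edges, while the remaining chords of the cycle are used to reconnect the detached end segments. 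The outcome is a system of non-trivial disjoint paths in $G$ with at least as many edges as $H$ but strictly fewer components, contradicting the choice of $P_1,\dots,P_k$ (first the maximality of $m(H)$, then, failing that, the minimality of $k$). A small instance already illustrates the mechanism: three internally disjoint paths of length two whose central vertices are mutually joined by chords and all joined to $u$ can be recombined, via $u$, into fewer paths with the same number of edges.

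Given that $G[N]$ has at most $t-1$ edges, the number of non-edges inside $N$ is at least $\binom{t}{2}-(t-1)=\binom{t-1}{2}$, and since $\binom{x-1}{2}$ is non-decreasing in $x$ for $x\ge 2$ and $t\ge d$, this is at least $\binom{d-1}{2}$; as $N\subseteq V_2$, these are non-edges of $G[V_2]$. The main obstacle is the rerouting step: I expect the bookkeeping of which $H$-edges to delete and which chords and edges $uw_i$ to reinsert, so that the result is genuinely a linear forest with no smaller edge count but fewer non-trivial components, to be the only delicate point. The two-stage extremality of the path system, maximising $m(H)$ and then minimising $k$, is precisely what rules out a cycle in $G[N]$ and thereby forces $G[N]$ to be a forest.
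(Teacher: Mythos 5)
Your reduction of the claim to the assertion that $G[N]$ spans a forest is where the argument breaks down: that assertion is false, and the rerouting you describe does not exist in general. Concretely, suppose $H$ consists of the single path $v_1v_2\cdots v_7$ and $u$ is adjacent exactly to $v_2,v_4,v_6$, with $v_2v_4$, $v_4v_6$, $v_2v_6$ also edges of $G$. Then $H$ is optimal for both criteria: a union of disjoint paths with $7$ edges on these $8$ vertices would be a Hamiltonian path, whose endpoints would have to be $v_1$ and $v_7$ (the only vertices of degree $1$ in $G$); but then $v_3$ and $v_5$ force the edges $v_2v_3,v_3v_4,v_4v_5,v_5v_6$, and $v_1,v_7$ force $v_1v_2,v_6v_7$, so $v_2,v_4,v_6$ are already saturated and the internal vertex $u$ cannot receive two edges --- a contradiction. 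Moreover, every $6$-edge union of disjoint paths on these vertices has two components, hence at least one non-trivial path, so $H$ (one path, $u$ isolated) also minimizes the number of paths. Yet $N=\{v_2,v_4,v_6\}$ spans a triangle. The structural reason your exchange cannot succeed is this: to use a cycle edge $w_iw_j$ together with $uw_i$ and $uw_j$ you would close the triangle $uw_iw_j$ or create a vertex of degree $3$; and deleting one $H$-edge at each of $w_i,w_j$ and inserting $uw_i,uw_j$ keeps the edge count but cuts off two segments which in general are non-trivial paths (not isolated vertices), so the number of paths increases, while the remaining cycle edges are incident only with already-saturated vertices and cannot reconnect the cut-off pieces. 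Your ``small instance'' is misleading precisely because there the cut-off segments are single vertices, which disappear from the path count.

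The paper's proof takes a different, asymmetric route that avoids exactly this obstruction: after discarding at most one neighbor of $u$ that is an endvertex of some $Q_i$ (this is where Claim~\ref{claim2} and the drop from $d$ to $d-1$ enter), it orients the paths $Q_i$ and considers not $N$ but the set $N^-$ of in-neighbors (predecessors) of the vertices of $N$ along the oriented paths. If two predecessors $v^-,w^-$ were adjacent, then $H-vv^--ww^-+v^-w^-+uv+uw$ is again a union of non-trivial disjoint paths and has one \emph{more} edge than $H$: the added edge $v^-w^-$ reconnects exactly the two segments that the two deletions cut off, which is what your exchange lacks. Hence $N^-$ is independent, and since its vertices are distinct elements of $V_2$, this yields the ${d-1\choose 2}$ non-edges in $G[V_2]$ --- among the predecessors of the neighbors of $u$, not among those neighbors themselves. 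If you want to salvage your write-up, the counting at the end (at most $t-1$ edges would give ${t-1\choose 2}$ non-edges) can be kept, but the forest claim must be replaced by the independence of $N^-$.
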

\begin{proof}
By the definition of $d$, there is a vertex $u$ from $V_0$ that has at least $d$ neighbors in $V_2$.
By Claim \ref{claim2}, the vertex $u$ is adjacent to an endvertex in at most one of the paths $Q_1,\ldots,Q_\ell$.
Therefore, removing at most one of the neighbors of $u$ in $V_2$, which is an endvertex of some $Q_i$,
yields a set $N$ of at least $d-1$ neighbors of $u$ in $V_2$ 
such that there is an orientation of the $Q_i$
for which the set of in-neighbors $N^-$ of the vertices from $N$ all belong to the paths $Q_i$.
If $v^-w^-$ is an edge in $G$ for two vertices $v^-$ and $w^-$ from $N^-$
that are the in-neighbors of $v$ and $w$ from $N$, respectively,
then $H-vv^--ww^-+v^-w^-+vu+uw$
is the union of non-trivial disjoint paths in $G$ with more edges than $H$,
which is a contradiction. 
Hence, the set $N^-$ is independent, which implies the claim.
\end{proof}

\begin{claim}\label{claim4}
If $k\geq 2$, then $G$ contains at most $\frac{1}{2}n_1(n_2+\ell)$ edges between $V_1$ and $V_2$.
\end{claim}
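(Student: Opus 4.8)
The plan is to bound the number of edges of $G$ between $V_1$ and $V_2$ by grouping them according to the two endvertices $x_i,y_i$ of each path $P_i$, and to show that for each $i\in[k]$ these two endvertices have together at most $n_2+\ell$ neighbours in $V_2$; summing over the $k=\tfrac{n_1}{2}$ paths then yields the asserted bound $\tfrac12 n_1(n_2+\ell)$. As in Claims~\ref{claim2} and \ref{claim3}, the only tool is rerouting, but here I would exploit \emph{both} optimality conditions: the maximality of $m(H)$ and, crucially, the minimality of $k$. The latter is the reason for the hypothesis $k\ge 2$, since it guarantees the presence of a second path into which a vertex can be absorbed. Recall also that, by Claim~\ref{claim1}, every $G$-neighbour of an endvertex other than its partner lies in $V_2$, so the count below really is over all relevant incidences.

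First I would record the elementary reduce-$k$ move. If an endvertex, say $x_i$, is adjacent in $G$ to the vertex of some foreign path $Q_j$ (with $j\neq i$) that is incident in $H$ with an endvertex of $P_j$, then deleting that $H$-edge at the endvertex of $P_j$ and inserting the edge from $x_i$ in its place merges $P_i$ and $P_j$ into a single path while isolating that endvertex of $P_j$. This keeps $m(H)$ unchanged but decreases the number of nontrivial paths, contradicting the minimality of $k$; this is exactly where $k\ge2$ is needed, so that $P_i$ and $P_j$ are genuinely distinct. Hence no endvertex is adjacent to such a near-endpoint vertex of a foreign path. The two own-path incidences $x_jb_1$ and $b_{n(Q_j)}y_j$ present on each nontrivial $Q_j$ are then charged to the slack term $\ell$ rather than to $n_2$.

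Second, and this is the heart of the matter, I would control the remaining adjacencies — an endvertex to the \emph{interior} of a foreign path, and an endvertex to its \emph{own} path — by P\'osa-type rotations. A single reroute at an endvertex is necessarily edge-neutral, because an endvertex has only one free slot, so no individual such adjacency can be forbidden; the factor $\tfrac12$ is therefore not a statement about one endvertex but emerges globally. Fixing one endvertex of $P_i$ and repeatedly rotating at the other end, I would consider the set $R_i$ of vertices that can be made an endvertex without changing $m(H)$ or $k$. Optimality forces that no vertex of $R_i$ has a neighbour in $V_0$, for otherwise one could extend a path and increase $m(H)$, and that no vertices reachable from two different paths are adjacent, for otherwise one could merge and decrease $k$. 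A P\'osa-type bound on the neighbourhood of $R_i$, together with the interval structure of rotation endpoints along each $Q_j$, then limits the total number of interior vertices to which $x_i$ and $y_i$ can be jointly adjacent, delivering the per-path estimate $n_2+\ell$.

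I expect the main obstacle to be precisely this global bookkeeping. Because endvertex reroutes are edge-neutral, one cannot argue vertex-by-vertex, and the delicate point is to run the rotation argument simultaneously for all paths while tracking which vertices have already served as rotation pivots, so that interior neighbours are counted without duplication and the own-path incidences are absorbed into $\ell$ rather than $n_2$. I would also have to treat trivial paths (those $P_j$ with empty interior $Q_j$) separately, and verify that every invocation of a merge genuinely uses $k\ge2$. Once the per-path bound $\deg_{V_2}(x_i)+\deg_{V_2}(y_i)\le n_2+\ell$ is established, summation over the $\tfrac{n_1}{2}$ paths completes the proof.
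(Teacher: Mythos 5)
Your proposal has a genuine gap, and it sits exactly where you predicted the ``main obstacle'' would be: the per-path estimate $\deg_{V_2}(x_i)+\deg_{V_2}(y_i)\le n_2+\ell$, on which your whole plan rests, does not follow from what your argument invokes (maximality of $m(H)$, minimality of $k$, and $k\ge 2$) -- in fact it is false under those hypotheses. Take $G$ to be a clique on $\{x_1,u_1,\ldots,u_p,y_1\}$ together with a disjoint edge $x_2y_2$, and let $H$ consist of the Hamiltonian path $x_1u_1\ldots u_py_1$ of the clique plus the edge $x_2y_2$. Then $m(H)$ is maximum and $k=2$ is minimum (the graph is disconnected, so no single path can do better), $n_2=p$, $\ell=1$, yet $\deg_{V_2}(x_1)+\deg_{V_2}(y_1)=2p>p+1=n_2+\ell$. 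Claim~\ref{claim4} itself survives here, since $2p\le \frac{1}{2}\cdot 4\cdot(p+1)$, but only because the endvertices of $P_2$ contribute nothing; this cross-path compensation is invisible to any grouping that pairs the two ends of the \emph{same} path. The structural reason your grouping cannot work is one you half-identified: a crossing at the two ends of one path, say $x_i\sim u_{q+1}$ and $y_i\sim u_q$ on its own interior $Q_i$, cannot be excluded by any reroute, because $H-u_qu_{q+1}+x_iu_{q+1}+y_iu_q$ contains a \emph{cycle} and is not a union of paths; hence both ends of a path may be complete to its own interior, as in the example. Your fallback (P\'osa rotations, the sets $R_i$, ``global bookkeeping'') is only a sketch with no actual counting step, and since the statement it aims at is false, no bookkeeping can rescue it.

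The missing idea, which is what the paper does, is to pair endvertices of \emph{different} paths cyclically: consider the pairs $\{x_i,y_{i+1}\}$ for $i\in[k]$, identifying $y_{k+1}$ with $y_1$; this is where $k\ge 2$ enters. For such a pair and \emph{any} $Q_j$ (including $j=i$ and $j=i+1$), the crossing $x_i\sim u_{q+1}$, $y_{i+1}\sim u_q$ is forbidden, because $H-u_qu_{q+1}+x_iu_{q+1}+y_{i+1}u_q$ is again a union of non-trivial disjoint paths but with one more edge than $H$: removing $u_qu_{q+1}$ splits $P_j$ into two subpaths, and since $x_i$ and $y_{i+1}$ lie on distinct paths, the two added edges attach path ends to path ends rather than closing a cycle. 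The standard count then gives at most $n(Q_j)+1$ edges between $\{x_i,y_{i+1}\}$ and $V(Q_j)$, hence at most $n_2+\ell$ edges from each pair to $V_2$; summing over the $k=\frac{n_1}{2}$ pairs, which cover every endvertex exactly once, yields the claim. Note that this crossing move removes one edge and adds two, so it is \emph{not} edge-neutral -- your remark that single-endvertex reroutes are edge-neutral is correct but led you away from the two-endvertex move that drives the proof. Your reduce-$k$ move, by contrast, is a correct step (of the same kind as in the proof of Claim~\ref{claim2}), but it becomes unnecessary once the cyclic pairing is in place.
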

\begin{proof}
Let $i,j\in [k]$.
Let $P_j:x_ju_1\ldots u_py_j$, that is, $Q_j$ is $u_1\ldots u_p$.
If there is some $q\in [p-1]$ such that $x_i$ is adjacent to $u_{q+1}$ and $y_{i+1}$ is adjacent to $u_q$,
where we identify $y_{k+1}$ with $y_1$,
then $H-u_qu_{q+1}+x_iu_{q+1}+y_{i+1}u_q$ is the union of non-trivial disjoint paths in $G$ 
with more edges than $H$, which is a contradiction. 
Note that this is also true even if $j=i$ or $j=i+1$.
Therefore, there are at most $p+1$ edges in $G$ between $\{ x_i,y_{i+1}\}$ and $V(Q_j)$,
which easily implies the statement.
\end{proof}
We are now in a position to estimate the total number $m(G)$ of edges of $G$
in order to derive a contradiction.

First, we assume that $k=1$.
In this case, the claims imply
\begin{eqnarray*}
m(G) & \leq & 1+dn_0+{n_2\choose 2}-{d-1\choose 2}+n_1n_2.
\end{eqnarray*}
Considered as a quadratic function of $d$, 
the right hand side is maximized for $d=n+\frac{1}{2}-m(H)>\frac{m(H)}{2}$.
Hence, the function is increasing for $d\stackrel{(\ref{ed1})}{\leq} \frac{n_2+1}{2}=\frac{m(H)}{2}$,
and substituting $d=\frac{m(H)}{2}$ yields
\begin{eqnarray*}
m(G) & \leq & -\frac{1}{8}m(H)^2+\left(\frac{4n+6}{8}\right)m(H)-1,
\end{eqnarray*}
which, for $m(H)<2n + 3 - \sqrt{2n^2 + 14n + 1}$,
is strictly less than $\frac{1}{2}{n\choose 2}$,
a contradiction.

Next, let $k\geq 2$.
In this case, the claims imply
\begin{eqnarray*}
m(G)&\leq & k+dn_0+{n_2\choose 2}-{d-1\choose 2}+\frac{1}{2}n_1\left(n_2+\frac{n}{3}\right),
\end{eqnarray*}
where the last term $n_1n_2$ has been improved using Claim \ref{claim4} and $\ell\leq \frac{n}{3}$.

We consider two cases according to the value of $k$.

First, we suppose that $k\leq 2n-3m(H)+2$.
In this case, considered as a quadratic function of $d$, 
the upper bound on $m(G)$ is maximized for $d=\frac{n_2+1}{2}=\frac{m(H)-k+1}{2}$.
Substituting this value for $d$, we obtain
\begin{eqnarray*}
m(G)&\leq & -\frac{1}{8}k^2-\left(\frac{2n-3m(H)-6}{12}\right)k+\frac{(4n-m(H)-3)(m(H)+1)}{8}.
\end{eqnarray*}
Now, considered as a quadratic function of $k$,
this upper bound is maximized for $k=2$.
Substituting this value for $k$, we obtain
\begin{eqnarray*}
m(G)&\leq & \frac{1}{6}n+\frac{1}{8}-\frac{1}{8}m(H)^2+\frac{1}{2}m(H)n,
\end{eqnarray*}
which, for $m(H)<2n + 3 - \sqrt{2n^2 + 14n + 1}$ and $n\geq 10$,
is strictly less than $\frac{1}{2}{n\choose 2}$,
a contradiction.

Next, we suppose that $k\geq 2n-3m(H)+3$.
In this case, considered as a quadratic function of $d$, 
the upper bound on $m(G)$ is maximized for $d=n-m(H)-k+\frac{3}{2}$.
Substituting this value for $d$, we obtain
\begin{eqnarray*}
m(G)&\leq & \frac{3}{2}n+ km(H) + m(H)^2 - nm(H) - \frac{2}{3}nk + \frac{1}{2}n^2 + \frac{1}{8} - 2m(H).
\end{eqnarray*}
Now, considered as a linear function of $k$,
this upper bound is decreasing in $k$ for $m(H)<2n + 3 - \sqrt{2n^2 + 14n + 1}$.
Hence, substituting $k=2n-3m(H)+3$ yields
\begin{eqnarray*}
m(G)&\leq & -\frac{5}{6}n^2+\frac{1}{2}(6m(H)-1)n-2m(H)^2+m(H)+\frac{1}{8},
\end{eqnarray*}
which, for $m(H)<2n + 3 - \sqrt{2n^2 + 14n + 1}$,
is strictly less than $\frac{1}{2}{n\choose 2}$,
a contradiction.

This completes the proof.
\end{proof}

\begin{proof}[Proof of Theorem \ref{theorem1}(ii)]
For $(K,c)$ as in the statement, 
let $C_1,\ldots,C_k$ with $3k=n$ be the components of a $C_3$-factor $F$ of $K$.
For $j\in \{ 0,1,2,3\}$, let $t_j$ be such that $t_jn$ is the number of $i$ in $[k]$ with $m^+(C_i)=j$.
In particular, 
\begin{eqnarray}\label{et1}
t_0+t_1+t_2+t_3=\frac{1}{3}.
\end{eqnarray}
We assume that $F$ is chosen in such a way that 
\begin{itemize}
\item $m^+(F)=m^+(C_1)+\cdots+m^+(C_k)$ is as large as possible, and
\item subject to the first condition, the value of $t_2$ is as large as possible.
\end{itemize}
The choice of $F$ allows to upper bound the number of plus-edges between any two of the triangles in $F$.
If $C_i$ and $C_j$ satisfy $m^+(C_i)=m^+(C_j)=0$, 
then any plus-edge between $V(C_i)$ and $V(C_j)$ allows to replace 
$C_i$ and $C_j$ within $F$ by two triangles $C_i'$ and $C_j'$ such that $m^+(C_i)+m^+(C_j)\geq 1$,
which would contradict the choice of $F$.
If $C_i$ and $C_j$ satisfy $m^+(C_i)=0$ and $m^+(C_j)=3$,
and there are at least $4$ plus-edges between these two triangles, 
then there are two such edges, say $e$ and $f$, that are disjoint,
and replacing $C_i$ and $C_j$ within $F$ by two triangles $C_i'$ and $C_j'$ such that 
$C_i'$ contains $e$ and exactly one edge from $C_i$, and
$C_j'$ contains $f$ and exactly one edge from $C_j$,
yields a contradiction either to the first condition or to the second condition 
within the choice of $F$.
If $C_i:xyzx$ and $C_j:x'y'z'x'$ satisfy $m^+(C_i)=2$, $c(xz)=-1$, and $m^+(C_j)=3$,
and there are at least $8$ plus-edges between these two triangles, 
then, by symmetry, we may assume that all minus-edges between these two triangles
are between $\{ x,y\}$ and $\{ x',y'\}$, and
replacing $C_i$ and $C_j$ within $F$ by the two triangles $x'y'zx'$ and $xyz'x$,
contradicts the choice of $F$.
Now, suppose that $C_i:xyzx$ and $C_j:x'y'z'x'$ satisfy $m^+(C_i)=1$, $c(xz)=1$, $m^+(C_j)=2$, and $c(x'z')=-1$.
Considering the two triangles $xy'zx$ and $x'yz'x'$ implies 
that at most two of the four edges $xy'$, $y'z$, $x'y$, and $yz'$ are plus-edges.
Hence, if there are at least $4$ plus-edges between the two triangles, 
then, by symmetry, we may assume that $xx'$ is a plus-edge.
Considering the two triangles $xx'zx$ and $yz'y'y$ implies 
that all the three edges $x'z$, $yy'$, and $yz'$ are minus-edges.
Considering the two triangles $xx'y'x$ and $yzz'y$ implies 
that one of the two edges $xy'$ and $zz'$ is a minus-edges.
Finally,
considering the two triangles $xx'yx$ and $y'z'zy'$ implies 
that one of the two edges $x'y$ and $y'z$ is a minus-edges.
Altogether, these observations yield at least $3+1+1$ minus-edges between $V(C_i)$ and $V(C_j)$,
which implies that there are at most $4$ plus-edges between these two triangles.

The following table summarizes the upper bounds on the number of plus-edges between the different types of triangles in $F$.
Since verifying the correctness of these values is straightforward, we leave the remaining details to the reader.

\begin{center}
\begin{tabular}{|c||c|c|c|c|}\hline
$(i,j)$ & $m^+(C_j)=0$ & $m^+(C_j)=1$ & $m^+(C_j)=2$ & $m^+(C_j)=3$\\ \hline \hline 
$m^+(C_i)=0$ & 0 & 0 & 3 & 3 \\ \hline
$m^+(C_i)=1$ & 0 & 1 & 4 & 6 \\ \hline
$m^+(C_i)=2$ & 3 & 4 & 5 & 7 \\ \hline
$m^+(C_i)=3$ & 3 & 6 & 7 & 9 \\ \hline
\end{tabular}
\end{center}
Since $c$ is balanced, we obtain
\begin{eqnarray*}
\left(\frac{1}{4}-o(1)\right)n^2 & = & \frac{1}{2}{n\choose 2}
=m^+(K)\\
& \leq &
t_1n+2t_2n+3t_3n
+{t_1n\choose 2}
+5{t_2n\choose 2}
+9{t_3n\choose 2}\\
&& 
+3t_0n(t_2n+t_3n)
+4t_1nt_2n
+6t_1nt_3n
+7t_2nt_3n\\
& = &
\left(\frac{1}{2}t_1^2+\frac{5}{2}t_2^2+\frac{9}{2}t_3^2+3t_0(t_2+t_3)+4t_1t_2+6t_1t_3+7t_2t_3+o(1)\right)n^2,
\end{eqnarray*}
and, hence,
\begin{eqnarray}\label{et2}
h(t_0,t_1,t_2,t_3) &:= & \frac{1}{2}t_1^2+\frac{5}{2}t_2^2+\frac{9}{2}t_3^2+3t_0(t_2+t_3)+4t_1t_2+6t_1t_3+7t_2t_3 
\geq \frac{1}{4}-o(1).
\end{eqnarray}
Since $m^+(F)=(t_1+2t_3+3t_3)n$, in order to complete the proof, 
it suffices to show that the optimum value of the following optimization problem is at least 
$\frac{3\sqrt{2}}{4}-\frac{1}{2}-o(1)$:
\begin{eqnarray}\label{et3}
\begin{array}{rccl}
\min & t_1+2t_2+3t_3 & & \\[3mm]
s.th. & t_0+t_1+t_2+t_3 & = & \frac{1}{3}\\[3mm]
 & h(t_0,t_1,t_2,t_3) & \geq &\frac{1}{4}-o(1)\\[3mm]
 & t_0,t_1,t_2,t_3 & \geq & 0
\end{array}
\end{eqnarray}
Since 
$\frac{\partial}{\partial t_0}h(t_0,t_1,t_2,t_3)$ 
is strictly less than 
$\frac{\partial}{\partial t_i}h(t_0,t_1,t_2,t_3)$ for every $i$ in $[3]$
and every feasible solution $(t_0,t_1,t_2,t_3)$ of (\ref{et3}),
every optimum solution $(t_0,t_1,t_2,t_3)$ of (\ref{et3}) 
satisfies the inequality (\ref{et2}) with equality;
otherwise, slightly inceasing $t_0$ and decreasing $t_1+t_2+t_3$ by the same amount
would yield a better feasible solution.
Note 
that 
$\frac{\partial}{\partial t_3}h(t_0,t_1,t_2,t_3)-\frac{\partial}{\partial t_0}h(t_0,t_1,t_2,t_3)
=3t_0 + 6t_1 + 4t_2+6t_3\geq 1$
for every feasible solution $(t_0,t_1,t_2,t_3)$ of (\ref{et3}), 
that 
$\frac{\partial}{\partial t_3}h(t_0,t_1,t_2,t_3)-\frac{\partial}{\partial t_1}h(t_0,t_1,t_2,t_3)
=3t_0 + 5t_1 + 3t_2+3t_3\geq 1$
for every feasible solution $(t_0,t_1,t_2,t_3)$ of (\ref{et3}), and 
that 
$\frac{\partial}{\partial t_3}h(t_0,t_1,t_2,t_3)-\frac{\partial}{\partial t_2}h(t_0,t_1,t_2,t_3)
=2t_1 + 2t_2+2t_3\geq \frac{2}{3}$
for every feasible solution $(t_0,t_1,t_2,t_3)$ of (\ref{et3}).
This implies that, 
for every optimum solution $(t_0,t_1,t_2,t_3)$ of (\ref{et3}),
increasing $t_3$ by $o(1)$ and decreasing $t_0+t_1+t_2$ by the same amount
without violating the condition $t_0,t_1,t_2\geq 0$,
yields a feasible solution for the following optimization problem (\ref{et4}), 
whose objective function value is larger by at most $o(1)$.
\begin{eqnarray}\label{et4}
\begin{array}{rccl}
\min & t_1+2t_2+3t_3 & & \\[3mm]
s.th. & t_0+t_1+t_2+t_3 & = & \frac{1}{3}\\[3mm]
 & h(t_0,t_1,t_2,t_3) & = &\frac{1}{4}\\[3mm]
 & t_0,t_1,t_2,t_3 & \geq & 0
\end{array}
\end{eqnarray}
Since the optimum value of (\ref{et4}) is at least the optimum value of (\ref{et3}),
this implies that the two optimal values differ only by $o(1)$.
Hence, in order to complete the proof, 
it suffices to show that the optimum value of (\ref{et4}) is at least $\frac{3\sqrt{2}}{4}-\frac{1}{2}$.

The equation $h(t_0,t_1,t_2,t_3)=\frac{1}{4}$ allows to express $t_0$ in terms of $t_1$, $t_2$, and $t_3$,
and substituting the corresponding expression into $t_0 + t_1 + t_2 + t_3 =\frac{1}{3}$, allows to 
express $t_2$ in terms of $t_1$ and $t_3$.
Substituting these expressions for $t_0$ and $t_2$, we obtain
$$t_1+2t_2+3t_3=3t_1 + 5t_3 + 2 -\sqrt{8t_1^2 + (32t_3 + 8)t_1 + 16t_3^2 + 16t_3 + 2}=:f(t_1,t_3).$$
It follows that the optimum value of (\ref{et4}) is at least the optimum value of the following optimization problem,
where we implicitly relax the conditions ``$t_0\geq 0$'' and ``$t_2\geq 0$'':
\begin{eqnarray}\label{et5}
\min \left\{ f(t_1,t_3):
t_1,t_3 \geq 0\mbox{ and }t_1+t_3 \leq \frac{1}{3}\right\}
\end{eqnarray}
Since there is no point $(t_1,t_3)$ 
in the interior of $\left\{ (x,y):x,y\geq 0\mbox{ and }x+y\leq \frac{1}{3}\right\}$
for which $\frac{\partial}{\partial t_1}f(t_1,t_3)=\frac{\partial}{\partial t_3}f(t_1,t_3)=0$,
the minimum (\ref{et5}) is assumed on the boundary.

Since $f(t_1,0)=\left(3-2\sqrt{2}\right)t_1 + 2-\sqrt{2}$, we obtain
\begin{eqnarray*}
\min \left\{ f(t_1,0):
0\leq t_1\leq \frac{1}{3}\right\}
&=& f(0,0)= 2-\sqrt{2}>\frac{3\sqrt{2}}{4}-\frac{1}{2}.
\end{eqnarray*}
Since 
$\frac{\partial}{\partial t_3}f(0,t_3)=0$ for $t_3\in \left[0,\frac{1}{3}\right]$
only if $t_3=\frac{5\sqrt{2}}{12}-\frac{1}{2}\approx 0.08$, we obtain
\begin{eqnarray*}
\min \left\{ f(0,t_3):
0\leq t_3\leq \frac{1}{3}\right\}
&=&\min\left\{ f(0,0),f\left(0,\frac{5\sqrt{2}}{12}-\frac{1}{2}\right),f\left(0,\frac{1}{3}\right)\right\}
=\frac{3\sqrt{2}}{4}-\frac{1}{2}.
\end{eqnarray*}
Finally, since
$\frac{\partial}{\partial t_1}f\left(t_1,\frac{1}{3}-t_1\right)=0$ for $t_1\in \left[0,\frac{1}{3}\right]$
only if $t_1=\frac{5\sqrt{6}}{18}-\frac{1}{2}$, we obtain
\begin{eqnarray*}
\min \left\{ f\left(t_1,\frac{1}{3}-t_1\right):0\leq t_1\leq \frac{1}{3}\right\}
&=&\min\left\{
f\left(0,\frac{1}{3}\right),
f\left(\frac{5\sqrt{6}}{18}-\frac{1}{2},\frac{5}{6}-\frac{5\sqrt{6}}{18}\right),f\left(\frac{1}{3},0\right)
\right\}\\
&=& \frac{14}{3}-\frac{5\sqrt{6}}{3}>\frac{3\sqrt{2}}{4}-\frac{1}{2}.
\end{eqnarray*}
Altogether, it follows that the optimum value of (\ref{et5}) is $\frac{3\sqrt{2}}{4}-\frac{1}{2}$,
which implies that the optimum value of (\ref{et4}) is also at least this value.
This completes the proof. 
\end{proof}
It seems possible to apply a similar approach to other graphs whose components 
are all isomorphic, such as, for instance, $K_4$-factors or $K_{1,3}$-factors.

\section{Conclusion}

An obvious task motivated by our results is to determine values $c_\Delta$ more precisely;
at least, for small values of $\Delta$. 
Furthermore, it seems straightforward to generalize Theorem \ref{theorem1}
to not necessarily balanced edge labelings $c$.

There seem to be no immediate directed analogues of our results.
If $D$ is the complete digraph with vertex set $[2n]$,
that is, between every two vertices of $D$, there are both possible arcs,
and $c:A(D)\to\{ \pm 1\}$ is such that 
$c((u,v))=1$ for all arcs $(u,v)$ with $u\in [n]$, and 
$c((u,v))=-1$ for all arcs $(u,v)$ with $u\in [2n]\setminus [n]$,
then there are equally many plus- and minus-arcs,
but every directed Hamiltonian cycle has exactly $n$ plus- and $n$ minus-arcs.
Similarly, there are tournaments $T$ with a unique directed Hamiltonian cycle,
which allows to force all plus-arcs or all minus-arcs in the directed Hamiltonian cycle
even though $T$ has equally many plus- and minus-arcs.

The problems studied in this paper clearly relate to classical and new results 
concerning extremal graph theory, Ramsey theory, and Hamiltonicity.
Further directions that could be pursued may be inspired by \cite{bagu,zhlibr}.

\end{document}